\definecolor{nicos-red}{rgb}{0.75,0.0,0.0}
\definecolor{light-gray}{gray}{0.6}
\definecolor{really-light-gray}{gray}{0.8}
\definecolor{sussexg}{rgb}{0.0,0.5,0.5}
\definecolor{sussexp}{rgb}{0.5,0.0,0.5}
\newtheorem{theorem}{\sc Theorem}[section]
\newtheorem{lemma}[theorem]{\sc Lemma}
\numberwithin{equation}{section}
\theoremstyle{remark}
\newcommand{\be}{\begin{equation}}
\newcommand{\ee}{\end{equation}}
\def\bE{\mathbb{E}}
\def\bN{\mathbb{N}}
\def\bP{\mathbb{P}}
\def\N{\bN}
\def\E{\bE}
\def\P{\bP} 
\definecolor{darkgreen}{rgb}{0.0,0.5,0.0}
\definecolor{darkblue}{rgb}{0.0,0.0,0.3}
\definecolor{nicosred}{rgb}{0.65,0.1,0.1}
\definecolor{light-gray}{gray}{0.7}
\DeclarePairedDelimiter\ceil{\lceil}{\rceil}
\DeclarePairedDelimiter\floor{\lfloor}{\rfloor}
\title{A new method for the robust characterisation of pairwise statistical dependency between point processes}
\author[1]{Antoine Messager}
\author[2]{Nicos Georgiou}
\author[1]{Luc Berthouze}			
\affil[1]{Department of Informatics, University of Sussex}
\affil[2]{Department of Mathematics, University of Sussex}
\date{}                     
\begin{document}
\maketitle

\begin{abstract}

\noindent The robust detection of statistical dependencies between the components of a complex system is a key step in gaining a network-based understanding of the system. Because of their simplicity and low computation cost, pairwise statistics are commonly used in a variety of fields. Those approaches, however, typically suffer from one or more limitations such as lack of confidence intervals requiring reliance on surrogate data, sensitivity to binning, sparsity of the signals, or short duration of the records. In this paper we develop a method for assessing pairwise dependencies in point processes that overcomes these challenges. Given two point processes $X$ and $Y$ each emitting a given number of events $m$ and $n$ in a fixed period of time $T$, we derive exact analytical expressions for the expected value and standard deviation of the number of pairs events $X_i,Y_j$ separated by a delay of less than $\tau$ one should expect to observe if $X$ and $Y$ were i.i.d. uniform random variables. We prove that this statistic is normally distributed in the limit of large $T$, which enables the definition of a Z-score characterising the likelihood of the observed number of coincident events happening by chance. We numerically confirm the analytical results and show that the property of normality is robust in a wide range of experimental conditions. We then experimentally demonstrate the predictive power of the method using a noisy version of the common shock model. Our results show that our approach has excellent behaviour even in scenarios with low event density and/or when the recordings are short. 
\end{abstract}

\section{Introduction}

\noindent Network-based modelling, whereby nodes denote components of the system and edges denote interactions between those components, has become a paradigm of choice for describing, understanding and controlling complex systems~\cite{newman2018networks}. It is widely used in many fields including neuroscience~\cite{sporns2010networks}, mathematical epidemiology~\cite{newman_spread_2002} and social sciences~\cite{kwak2010twitter} to name just a few. However, in many cases, the actual connectivity between individuals may not be directly available or may be incomplete. A substantial body of work has therefore focused on inferring links using the activity of the nodes (e.g. neural spikes, computer events, tweets, gene expression levels) and coupling measures~\cite{blinowska2011review, quaglio2018methods}. In this paper, we focus on the problem of inferring \textit{functional connectivity}, i.e., the graph of pairwise statistical dependences between components of the system as opposed to \textit{causal connectivity} where such dependences are assumed to be causal~\cite{rubinov2010complex}. Further, we restrict ourselves to systems involving point processes. Although the discrete nature of such processes provides greater analytical tractability, reliable inference of functional coupling between components remains a challenging problem: the exact nature of the point processes may not be known~\cite{rummel2010analyzing}; non linearity, non stationarity and noise are often to be expected~\cite{muller2008estimating, netoff2004analytical, pascual2007instantaneous}; and finally, recordings may be short and repeated measurement unavailable \cite{rummel2010analyzing}.\newline

\noindent A common way to quantify functional interaction between two point processes consists in computing Pearson's cross-correlation and selecting its maximal value~\cite{kramer2009network, zandvakili2015coordinated}. Theoretically, after a hyperbolic transformation, the resulting value can then be compared to the theoretical value of the standard deviation and an interpretable Z-score may be extracted~\cite{fisher1921probable}. However, there are a number of issues with this approach: it assumes stationarity\footnote{The issue of stationarity is common to most methods (including our own) in their basic form and is typically addressed through windowing and/or more sophisticated filtering methods.}, it is bin dependent, it assumes that recordings are infinitely long, the Z-score is only approximately normal if the processes are bivariate normally distributed (hence making its interpretation somewhat hazardous) and the selection of the maximal value limits the ability to characterise different patterns of interaction, especially if a large range of lags is considered. Hence, in neuroscience and genetics, in particular, it is still very common to simply threshold the cross-correlation, where the threshold is arbitrarily chosen~\cite{buckner2009cortical,eguiluz2005scale} or based on some expected or desired~\cite{bassett_adaptive_2006,hayasaka_comparison_2010,wang2009parcellation} characteristics of the resulting network. Whilst such an approach removes the assumption of bivariate normal distribution, it is somewhat unsatisfactory because in general the characteristics of the  network being inferred are not known. More recent studies rely on comparing the cross-correlation to that of surrogate data. For example, Smith et al.~\cite{smith2011network} create null data consisting of testing timeseries from different subjects, i.e., without causal connections between them. This enables the choice of a threshold and interpretation of the output. However, this does not solve the other aforementioned issues and also introduces the computational cost of generating the surrogate data. \newline

\noindent A powerful alternative to the use of cross-correlation to infer functional connectivity is the frequency-domain concept of coherence (along with its somewhat less intuitive time-domain counterpart, the cumulant), applicable to both time series and point processes (or both) and for which rigorous confidence intervals have been derived~\cite{halliday1995framework}. Whilst this framework addresses many of the aforementioned issues (and we note recent work extending it to characterising directionality of interaction~\cite{halliday_nonparametric_2015}), its reliance on long recordings can make analysis problematic for short recordings or when the data are sparse~\cite{jarvis2001sampling}. The issue of short and/or sparse data was addressed in much prior work when some sought to count the expected number of coincidental pair of events and derive its expected standard deviation~\cite{johnson1976analysis}. This allowed the derivation of an interpretable Z-score and was successfully used to separate purely stimulus-induced correlations from intrinsic interneuronal correlation~\cite{shao1987normalized, eggermont1992neural, arnett1981cross}. Similar results were independently found by Palm et al. who thoroughly analysed them~\cite{palm1988significance}. Still, this approach did not address other issues such as bin dependency or the choice of a specific lag. \newline

\noindent In recent years, the neuroscience community (in particular) has sought to move away from pairwise statistics and reveal higher order structures~\cite{quaglio2018methods, brown2004multiple}. And whilst some of those newer approaches do meet the aforementionned challenges and efficiently recover functional networks (see~\cite{chen2011statistical} for example), application to large systems can render them slow and hence not suitable to large networks such as those found in commercial computer networks~\cite{annet-messager} or social media networks. In addition, these Bayesian approaches typically see their performance decrease with the size of the network due to the increase in the number of parameters to be fitted~\cite{stevenson2009bayesian}. \newline

\noindent In this paper, we derive a pairwise statistic of interaction between point processes that is computationally inexpensive and that overcomes all aforementioned problems: it is bin independent, it measures interactions at all lags, it does not require that the recording be infinitely long, it comes with confidence intervals. In Section~\ref{sec:method} we first derive exact analytical expressions for the expected value and standard deviation of the statistic. We then prove that this statistic is normally distributed. In Section~\ref{sub:agreement}, we demonstrate the excellent agreement between analytical and experimental results. In Section~\ref{sub:convergence}, we first experimentally confirm that the statistic converges to a normal distribution (as proven previously) and therefore enables the construction of a Z-score. We describe the role of rates and lag in the time needed for convergence. In Section~\ref{sub:sens_anal}, we use the delayed common shock model to demonstrate the effectiveness of our statistic in characterising both instantaneous and delayed interaction.

\section{Methods}
\label{sec:method}
\subsection{Graphical construction of the theoretical model}

For any integer time horizon $T$ we construct two independent Bernoulli sequences, $\widetilde{\bf X}_T$ and $\widetilde{\bf Y}_T$ up to $T$. To be precise, 
$\widetilde{\bf X}_T = (X_1, \ldots, X_T)$ and $\widetilde{\bf Y}_T = (Y_1, \ldots, Y_T)$. Each coordinate $X_i$,  $Y_j$ are i.i.d.\ Bernoulli($p_X$) and Bernoulli($p_Y$) respectively, 
and each realization is a sequence of 1's and 0's. Parameters $p_X$ ( resp. $p_Y$) is the probability of seeing a 1 in the $\widetilde{\bf X}_T$ (resp. $\widetilde{\bf Y}_T$) sequence. 
 On average, by the strong law of large numbers, one expects $p_X T$ many 1's in the $\widetilde{\bf X}_T$ sequence and $p_Y T$ in the $\widetilde{\bf Y}_T$ sequence, for large $T$ values. 
From central limit theorem considerations, the actual number of 1's is up to two leading orders $\widetilde{\bf X}_T$ is $p_X T+ c_Z \sqrt{T}$ where $c_Z$ will be a random normally distributed number, as long as $T$ is large enough. \newline 

\noindent We create a graphical arrangement of \emph{marks} using the two independent sequences on $\llbracket1, T\rrbracket^2$. A lattice square $(i,j)$ is considered marked if and only if $X_i = Y_j = 1$. We also define a \emph{lag}  $\delta $,  $(0 \le \delta \le T)$; we are interested in the number of times the two processes obtained the value 1 in a time interval of size $\delta$ in either direction; in other words we want to know how many marks exist in a band of vertical height $2\delta+1$ around the main diagonal $D = \{ (i,i): 1 \le i \le T \}$. \newline

\noindent In general, when there is availability of data, we can count the number of ones in the available time series (say $n_X, n_Y$) and then infer an approximation to the (generally unknown) parameters 
 $p_X \approx n_XT^{-1}$ and $p_Y \approx n_YT^{-1}$. Then one can run the theoretical model, as we describe it above, up to time horizon $T$ and create two new independent time series, compare them with data and use them for predictions. This we use in Section \ref{subsec:normality}, to derive an approximate central limit theorem (CLT) for the number of marks in a band, as $T \to \infty$. Part of the CLT is the expected value of marks of this completely independent model and approximate standard deviation. We use this result to construct a suitable Z-score for the number of marks in the band.  \newline
 
\noindent Consider a given lag $\delta$ and arrangements $X$, $Y$. We will find statistics for the number of marks in the band of distance $\delta$ around the main diagonal. 
The set of marks in the $\delta$-band $D_{T, \delta} = \{ (i,j) \in \llbracket1, T\rrbracket^2: |i -j| \le \delta \}$ is 
\be\label{eq:numm}
S_{T, \delta} =  \{(i, j) \in \llbracket 1, T\rrbracket ^2: |i -j| \le \delta, X_i = Y_j = 1 \}.
\ee
We denote by $|S_{T, \delta}|$ the cardinality of the set $S_{T, \delta} $; this is precisely the number of marks in $D_{T, \delta}$. 
In the next two subsections we show the calculations for the expected value, limiting standard deviation and approximate normality for 
$|S_{T, \delta}|$.\\

\noindent In all calculations that follow we adopt the standard notation of indicator functions. These are (Bernoulli) random variables that take values 1 or 0. For any $\omega$ in a sample space $\Omega$ and any event $A$
we denote by $1\!\!1\{ A \}$ the random variable which satisfies
\[
1\!\!1\{ A \}(\omega) = 
\begin{cases}
1, & \omega \in A,\\
0, & \omega \in \Omega \setminus A.
\end{cases}
\]
The distribution of the indicators is Bernoulli, with probability of success $\P\{A\}$ and therefore the expected value satisfies $\E(1\!\!1\{ A \}) = \P\{A\}$. We use these facts without any particular mention in the sequence.

\subsection{Expected value of the number of points in a $\delta$-band.}
\label{subsec:expvar}

For the purposes of this section, fix parameters $p_X$ and $p_Y$ to denote the probability of success for two independent Bernoulli sequences 
$\{ X_i \}_{ i \ge 1}$ and $\{ Y_j \}_{ j \ge 1}$ so that each sequence is i.i.d.\ with marginal distributions 
\[  X_i \sim \text{\rm Ber}(p_X) \quad \textrm{ and }  Y_j \sim \text{\rm Ber}(p_Y).\]
For any $T \in \N$, the random variable under consideration is  $|S_{T, \delta}|$.\\

\noindent
The computations rely on the decomposition 
\[
|S_{T, \delta} | = \sum_{(i, j): |i-j|\le \delta} 1\!\!1\{  X_i = Y_j = 1 \}. \newline
\]

\noindent With this we compute first the expected value of the number of marks in $D_{T,\delta}$.
\begin{align} 
\label{eq:expected_value}
\E(|S_{T, \delta} |)&=\E\bigg(\sum_{(i, j): |i-j|\le \delta} 1\!\!1\{  X_i = Y_j = 1 \}\bigg) =
\sum_{(i, j): |i-j|\le \delta} \E\left(1\!\!1\{  X_i = Y_j = 1 \}\right) \nonumber \\
&= \sum_{(i, j): |i-j|\le \delta} \P\{  X_i =1,  Y_j = 1  \} = \sum_{(i, j): |i-j|\le \delta} \P\{  X_i =1\}\P\{  Y_j = 1  \} \nonumber \\
&= p_Xp_Y \big( T(2\delta+1)- \delta(\delta+1)\big).
\end{align}
At the last line of the computation above we used the number of terms in the sum. This can be calculated by adding the integer cells on the $2\delta+1$ diagonals,
\be\label{eq:atd}
A_{T, \delta} =  T + 2 \sum_{k=1}^\delta (T-k) = T(2\delta+1)- \delta(\delta+1). 
\ee
Finally, observe that $A_{T, 0} = T$, which is precisely the size of the main diagonal. \newline

\subsection{Central limit theorem for number of marks in $\delta$-band as $T \to \infty$}
\label{subsec:normality}

\noindent We now present a different way to count the marks in the band, that will lend itself into the application of an ergodic CLT. 
For any $\delta+1 \le i \le T - \delta - 1$ consider the vector random variable of dimension $ d = 2\delta+1$ given by
 \[
 {\bf Y_i} = [ Y_{i - \delta}, Y_{i-\delta+1}, \ldots, Y_{i+\delta -1},  Y_{i+\delta}].
 \]
 Then the total number of marks between  $\delta+1 \le i \le T - \delta - 1$ is given by 
 \be
 L_{T, \delta} = \sum_{i=\delta +1}^{T - \delta -1}  X_i ( {\bf Y}_i \cdot{\bf 1}_{d}),
 \ee
 where ${\bf 1}_d$ is the $d$-dimensional vector $(1, 1 \ldots 1)$. For each $i$, a direct calculation gives that 
 \[
 \E( X_i ( {\bf Y}_i \cdot{\bf 1}_{d})) =  \E( X_i )\E( {\bf Y}_i \cdot{\bf 1}_{d})) = p_X p_Y(2\delta+1).
 \]
 Define $W_i = X_i ( {\bf Y}_i \cdot{\bf 1}_{d})$ for notational convenience.\newline
 
 \noindent Before stating the theorem, two observations follow. First we have the immediate inequality   
 \[
  L_{T, \delta}  \le  | S_{T, \delta}| \le L_{T, \delta} + (\delta+1)^2.
 \]
 Thus, as $T \to \infty$,  asymptotically  $L_{T, \delta} \sim |S_{T, \delta}|$.  The same inequality holds for expectations. Scaled by the same quantity,  both variables satisfy the same limiting law as long as $\delta$ does not depend on $T$. If $\delta$ depends on $T$, one needs more refined arguments to show limiting results, and they will depend on this relation as well. \newline
 
\noindent Second, notice that the random numbers $\{ W_i \}_{i}$ have the same distribution for each fixed $i$ and they are stationary and ergodic. Moreover, as long as  $| i - k | > 2\delta + 1$, variables $W_i = X_i ( {\bf Y}_i \cdot{\bf 1}_{d})$ and $W_k = X_k ( {\bf Y}_k \cdot{\bf 1}_{d})$ are completely independent. Therefore we have just defined a stationary ergodic sequence that is $2\delta+1$-dependent only. In particular this implies that the stationary sequence is \emph{strongly mixing}. \newline
 
 \noindent We introduce some notation.
 First, let 
 \[
 Z_i =   X_i ( {\bf Y}_i \cdot{\bf 1}_{d}) - p_X p_Y(2\delta+1) = W_i - \E(W_i).
 \]
 Therefore $\E(Z_i) = 0$. Also note that $\E(Z_i^{\ell}) < \infty$ for any $\ell$ (high moment hypotheses are crucial  for CLT for ergodic sequences). 
 Then define (the value does not depend on the index $\ell$ below as long as $\ell \ge \delta + 1$)
 \be \label{var}
 \sigma_{\delta}^2 = \E(Z_\ell^2) + 2\sum_{k=1}^{2 \delta+1} \E(Z_\ell Z_{\ell+k})= {\rm Var}(W_{\ell}) + 2 \sum_{k=1}^{2 \delta+1} {\rm Cov}(W_\ell,  W_{\ell+k}).
 \ee

\begin{lemma}
The constant $\sigma^2_{\delta}$ in \eqref{var} is given by 
\be\label{sigma2}
\sigma_\delta^2  = (2\delta+1)p_Xp_Y(1 - p_Xp_Y ) + 2\delta(2\delta+1) p_Xp_Y( p_Y(1-p_X) + p_X(1-p_Y)). 
\ee
\end{lemma}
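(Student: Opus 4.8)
The plan is to expand both contributions in \eqref{var} directly from the definition $W_i = X_i({\bf Y}_i \cdot {\bf 1}_d) = X_i \sum_{j=i-\delta}^{i+\delta} Y_j$, exploiting throughout the independence of the $X$-sequence from the $Y$-sequence together with the indicator identity $X_\ell^2 = X_\ell$. Writing $S_\ell = \sum_{j=\ell-\delta}^{\ell+\delta} Y_j$, a sum of $2\delta+1$ independent Bernoulli($p_Y$) variables, the only two facts I need are $\E(S_\ell) = (2\delta+1)p_Y$ and $\Var(S_\ell) = (2\delta+1)p_Y(1-p_Y)$.

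First I would compute $\Var(W_\ell)$. Using $X_\ell^2 = X_\ell$ and independence, $\E(W_\ell^2) = \E(X_\ell)\E(S_\ell^2) = p_X\big[(2\delta+1)p_Y(1-p_Y) + (2\delta+1)^2 p_Y^2\big]$, and subtracting $(\E W_\ell)^2 = p_X^2(2\delta+1)^2 p_Y^2$ gives $\Var(W_\ell) = (2\delta+1)p_Xp_Y(1-p_Y) + (2\delta+1)^2 p_X(1-p_X)p_Y^2$.

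Next come the covariances. For $1 \le k \le 2\delta+1$ the indices $\ell$ and $\ell+k$ are distinct, so $X_\ell$ and $X_{\ell+k}$ are independent of one another and of every $Y$; hence $\E(W_\ell W_{\ell+k}) = p_X^2\,\E(S_\ell S_{\ell+k})$ and, after subtracting the product of means, $\Cov(W_\ell, W_{\ell+k}) = p_X^2\,\Cov(S_\ell, S_{\ell+k})$. Since $\Cov(Y_a,Y_b) = p_Y(1-p_Y)$ when $a=b$ and $0$ otherwise, bilinearity reduces the block covariance to $p_Y(1-p_Y)$ times the number of shared $Y$-indices. The blocks $[\ell-\delta,\ell+\delta]$ and $[\ell+k-\delta,\ell+k+\delta]$ overlap in exactly $2\delta+1-k$ integers, so $\Cov(W_\ell, W_{\ell+k}) = p_X^2 p_Y(1-p_Y)(2\delta+1-k)$ (which correctly vanishes at $k=2\delta+1$). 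Summing with $\sum_{k=1}^{2\delta+1}(2\delta+1-k) = \delta(2\delta+1)$ yields $2\sum_k \Cov(W_\ell, W_{\ell+k}) = 2\delta(2\delta+1)p_X^2 p_Y(1-p_Y)$.

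Adding the two contributions and collecting the four monomials $p_Xp_Y$, $p_Xp_Y^2$, $p_X^2 p_Y$, $p_X^2 p_Y^2$ should reproduce \eqref{sigma2}. The step I expect to require the most care is precisely this final regrouping: the variance piece and the covariance piece are each manifestly asymmetric in $p_X$ and $p_Y$ (one $X$ weighed against $2\delta+1$ $Y$'s), whereas the target \eqref{sigma2} is symmetric under $p_X \leftrightarrow p_Y$ — as it must be, since $|S_{T,\delta}|$ is invariant under swapping the two processes together with reflection across the diagonal. Verifying that the asymmetric intermediate terms recombine into the symmetric factors $(1-p_Xp_Y)$ and $p_Y(1-p_X)+p_X(1-p_Y)$ is the one genuinely error-prone bookkeeping step; everything else is a mechanical consequence of independence and the overlap count.
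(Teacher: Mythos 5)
Your proposal is correct and follows essentially the same route as the paper: both compute $\Var(W_\ell)$ and the covariances $\Cov(W_\ell, W_{\ell+k})$ by exploiting independence and counting the $2\delta+1-k$ shared $Y$-indices, then sum using $\sum_{k=1}^{2\delta+1}(2\delta+1-k)=\delta(2\delta+1)$. The only cosmetic difference is that you obtain $\Var(W_\ell)$ from the binomial moments of $S_\ell$ rather than expanding the double sum term by term, and the final regrouping you flagged does indeed check out: writing $(2\delta+1)^2 = 2\delta(2\delta+1)+(2\delta+1)$ in your variance term, the monomials collect exactly into the symmetric form \eqref{sigma2}.
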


\begin{proof}

We are going to compute each of the terms appearing in the last expression of \eqref{var}. First, 
\begin{align*}
{\rm Var}(W_{\ell}) &= {\rm Cov}(W_\ell,  W_{\ell}) =  {\rm Cov}\Big(\sum_{j = -\delta}^{\delta} X_\ell Y_{\ell + j},  \sum_{k = -\delta}^{\delta} X_\ell Y_{\ell + k}\Big) \\
&= \sum_{j = -\delta}^{\delta} \sum_{k= -\delta}^{\delta}  {\rm Cov}( X_\ell Y_{\ell + j},  X_\ell Y_{\ell + k}) = \sum_{j = -\delta}^{\delta} \sum_{k= -\delta}^{\delta} \left( \E( X^2_\ell Y_{\ell + j} Y_{\ell + k}) - \E(X_\ell Y_{\ell + j})\E( X_\ell Y_{\ell + k})\right)\\
&= \sum_{i= -\delta}^{\delta} \left( \E( X^2_\ell Y_{\ell + i} ^2) - p_X^2p_Y^2 \right)+ \sum_{j = -\delta}^{\delta} \sum_{k= -\delta, k \neq j}^{\delta} \left( \E( X^2_\ell Y_{\ell + j} Y_{\ell + k}) - p_X^2p_Y^2\right)\\
&= \sum_{i= -\delta}^{\delta} \left( p_Xp_Y - p_X^2p_Y^2 \right)+ \sum_{j = -\delta}^{\delta} \sum_{k= -\delta, k \neq j}^{\delta} \left( \E( X^2_\ell Y_{\ell + j} Y_{\ell + k}) - p_X^2p_Y^2\right)\\
&= \sum_{i= -\delta}^{\delta} \left( p_Xp_Y - p_X^2p_Y^2 \right)+ \sum_{j = -\delta}^{\delta} \sum_{k= -\delta, k \neq j}^{\delta} \left( p_Xp_Y^2 - p_X^2p_Y^2\right)\\
&= (2\delta+1)p_Xp_Y(1 - p_Xp_Y + 2\delta p_Y(1-p_X)).
\end{align*}
The covariance for the other terms is computed in a similar way. First note that the variables common between $W_{\ell}$ and $W_{\ell + k}$ are the 
$Y_{\ell + j}$ for which $-\delta + k \le j \le \delta$ which precisely equal the values for $Y_{\ell + k + i }$ when $-\delta \le i \le \delta- k$. Then
\begin{align*}
{\rm Cov}(W_\ell,  W_{\ell+k}) &=  {\rm Cov}\Big(\sum_{j = -\delta}^{\delta} X_\ell Y_{\ell + j},  \sum_{i = -\delta}^{\delta} X_{\ell+k} Y_{\ell + k +i}\Big)=  {\rm Cov}\Big(\sum_{j = -\delta+k}^{\delta} X_\ell Y_{\ell + j},  \sum_{i = -\delta}^{\delta-k} X_{\ell+k} Y_{\ell + k +i}\Big)\\
&=  {\rm Cov}\Big(\sum_{j = -\delta+k}^{\delta} X_\ell Y_{\ell + j},  \sum_{i = -\delta+k}^{\delta} X_{\ell+k} Y_{\ell + i}\Big) = \sum_{j = -\delta+k}^{\delta} {\rm Cov}\Big( X_\ell Y_{\ell + j}, X_{\ell+k} Y_{\ell + j}\Big) \\
&=  \sum_{j = -\delta+k}^{\delta}\E(X_{\ell}) \E(X_{\ell +k})\E(Y_{\ell+j}^2) -  \E(X_{\ell}) \E(X_{\ell +k})\E(Y_{\ell+j})^2 \\
&= (2\delta - k +1) p_X^2 p_Y(1-p_Y).
\end{align*} 
Then, overall
\begin{align*}
 \sigma_{\delta}^2&=  (2\delta+1)p_Xp_Y(1 - p_Xp_Y + 2\delta p_Y(1-p_X)) + 2  p_X^2 p_Y(1-p_Y) \sum_{k=1}^{2\delta+1}  (2\delta - k +1)\\
  &=  (2\delta+1)p_Xp_Y(1 - p_Xp_Y ) + 2\delta(2\delta+1) p_Xp_Y( p_Y(1-p_X) + p_X(1-p_Y)). \qedhere
\end{align*}

\end{proof}

\noindent Then we can directly apply the central limit theorem for strongly mixing variables and obtain the following
 
\begin{theorem} Let $S_{T, \delta}$ be the number of marks as defined by \eqref{eq:numm}. Then, (with the notation introduced above), the following limit holds in distribution 
\be
\lim_{T \to \infty}  \frac{| S_{T, \delta}| - \E(| S_{T, \delta}|)}{\sqrt{T}} \stackrel{\mathcal D}{=} Z \sim \mathcal N(0, \sigma^2_{\delta}),
\ee
where $\sigma^2_{\delta}$ is given by \eqref{sigma2}.
\end{theorem}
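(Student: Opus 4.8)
The plan is to prove the central limit theorem first for the partial-sum statistic $L_{T, \delta} = \sum_{i=\delta+1}^{T-\delta-1} W_i$, for which the ergodic/mixing structure is manifest, and then to transfer the conclusion to $|S_{T,\delta}|$ via the sandwich inequality recorded above together with Slutsky's theorem. The workhorse is a central limit theorem for stationary $m$-dependent sequences (equivalently, strongly mixing sequences whose mixing coefficients vanish beyond a fixed lag), such as the classical Hoeffding--Robbins theorem, which is a special case of the Ibragimov--Rosenblatt CLT for strong mixing that the preceding discussion invokes.

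First I would pass to the centred variables $Z_i = W_i - p_Xp_Y(2\delta+1)$, so that $L_{T,\delta} - \E(L_{T,\delta}) = \sum_{i=\delta+1}^{T-\delta-1} Z_i$. As noted above, $\{Z_i\}$ is stationary and $(2\delta+1)$-dependent, and each $Z_i$ is bounded (a finite sum of products of Bernoulli variables minus a constant), hence has finite moments of all orders; the moment hypotheses of the mixing CLT are therefore trivially met, and the variance series converges because it terminates. Writing $N = T - 2\delta - 1$ for the number of summands, the $m$-dependent CLT gives
\[
\frac{1}{\sqrt{N}} \sum_{i=\delta+1}^{T-\delta-1} Z_i \xrightarrow{\mathcal D} \mathcal N(0, \sigma^2_\delta),
\]
where the limiting variance is precisely $\sigma^2_\delta = \Var(W_\ell) + 2\sum_{k\ge 1}\Cov(W_\ell, W_{\ell+k})$; by $(2\delta+1)$-dependence this sum truncates at $k = 2\delta+1$, reproducing \eqref{var}, whose closed form \eqref{sigma2} was established in the Lemma. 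Since $N/T \to 1$, one may replace $\sqrt{N}$ by $\sqrt{T}$ without changing the limit (another application of Slutsky), so that $(L_{T,\delta} - \E(L_{T,\delta}))/\sqrt{T} \xrightarrow{\mathcal D} \mathcal N(0, \sigma^2_\delta)$.

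Second, I would transfer this to $|S_{T,\delta}|$ using the deterministic bound $0 \le |S_{T,\delta}| - L_{T,\delta} \le (\delta+1)^2$. Dividing by $\sqrt{T}$, both the random difference and, after taking expectations, the difference of means are $O\!\big((\delta+1)^2/\sqrt{T}\big) \to 0$ as $T\to\infty$, since $\delta$ is held fixed. Hence
\[
\frac{|S_{T,\delta}| - \E(|S_{T,\delta}|)}{\sqrt{T}} = \frac{L_{T,\delta} - \E(L_{T,\delta})}{\sqrt{T}} + o_{\P}(1),
\]
and Slutsky's theorem yields the claimed convergence to $\mathcal N(0, \sigma^2_\delta)$.

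The main obstacle is one of bookkeeping rather than depth: one must (i) cite a version of the $m$-dependent / strong-mixing CLT all of whose hypotheses (stationarity, boundedness, finite-range dependence, convergence of the variance series) are verified here, and (ii) make the approximation step rigorous, confirming that both the boundary marks and the gap $|S_{T,\delta}| - L_{T,\delta}$ are negligible once scaled by $\sqrt{T}$. A minor point worth flagging is the degenerate case $\sigma^2_\delta = 0$, which from \eqref{sigma2} arises only for trivial parameter choices (namely $p_Xp_Y = 0$, or $p_X = p_Y = 1$) under which $|S_{T,\delta}|$ is deterministic; there the limit is a point mass at $0$, consistent with the statement read as a possibly degenerate Gaussian.
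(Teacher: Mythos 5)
Your proof is correct and follows essentially the same route as the paper's: both apply the CLT for stationary, finitely-dependent (hence strongly mixing) sequences to the centred variables $Z_i$, handle the $\sqrt{T-2\delta-1}$ versus $\sqrt{T}$ scaling discrepancy, and transfer the limit from $L_{T,\delta}$ to $|S_{T,\delta}|$ via the deterministic sandwich bound. Your additional remarks (naming the Hoeffding--Robbins $m$-dependent CLT explicitly and flagging the degenerate case $\sigma^2_\delta = 0$) are sound refinements of bookkeeping, not a different argument.
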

 
\begin{proof}
There exists a constant $C = C(\delta, p_X, p_Y) < \infty$ so that  
\[
 \Big| \frac{| S_{T, \delta}| - \E(|  S_{T, \delta}|)}{\sqrt{T}} - \frac{L_{T, \delta} - \E(L_{T, \delta})}{\sqrt{T}} \Big|\le  \frac{C}{\sqrt{T}}.
\]
This is a $\P$-a.s. statement, so as long as $T \to \infty$ the two fractions have the same distributional limit. Now focus on 
\begin{align*}
\frac{L_{T, \delta} - \E(L_{T, \delta})}{\sqrt{T}} &= \frac{ \sum_{\delta+1}^{T-\delta-1}W_i - (T - 2\delta-2)\E(W)}{\sqrt{T}} = \frac{\sum_{\delta+1}^{T-\delta-1}Z_i}{\sqrt{T}}\\
&= \frac{\sqrt{T-2\delta -1}}{\sqrt{T}}\cdot \frac{\sum_{\delta+1}^{T-\delta-1}Z_i}{\sqrt{T-2\delta-1}}.
\end{align*}
As $T \to \infty$, the fraction multiplying the variable tends to 1 while the second fraction converges weakly to $\mathcal N(0, \sigma^2_{\delta})$ by the CLT for strongly mixing ergodic sequences. 
\end{proof}

\section{Results}

\subsection{Agreement between analytical and empirical estimates}
\label{sub:agreement}

\noindent Given the strong dependence of the analytical expressions on the product of the rates ($p_Xp_Y$), we examined the agreement between analytical and empirical estimates for i.i.d. uniform point processes along two dimensions -- event density and rate homogeneity -- considering both average and limit cases. Concretely, we considered 4 pairs of point processes emitting events in $\llbracket 1;T\rrbracket$, T=1,000, with rates (0.05, 0.05), (0.05, 0.75), (0.75, 0.75) and (0.25, 0.50), respectively. Note that $p_X = p_Y = 0.75$ was chosen because it is the limit (for large lags) of the rate at which variance is maximised in the homogeneous case. As shown by Figure~\ref{fig:ex_accc}, agreement between analytical and empirical estimates is excellent across all scenarios, even when the expected number of events is extremely low. It should be noted that whereas the expected value of the statistic is insensitive to rate inhomogeneity, i.e., the expected value for one pair of point processes emitting (1,100) events and that of a pair emitting (10,10) events will be identical, the variance is not. For a given product of rates, the more heterogeneous the rates, the higher the standard deviation. \newline

\begin{figure}[h]
\includegraphics[width=1.\linewidth]{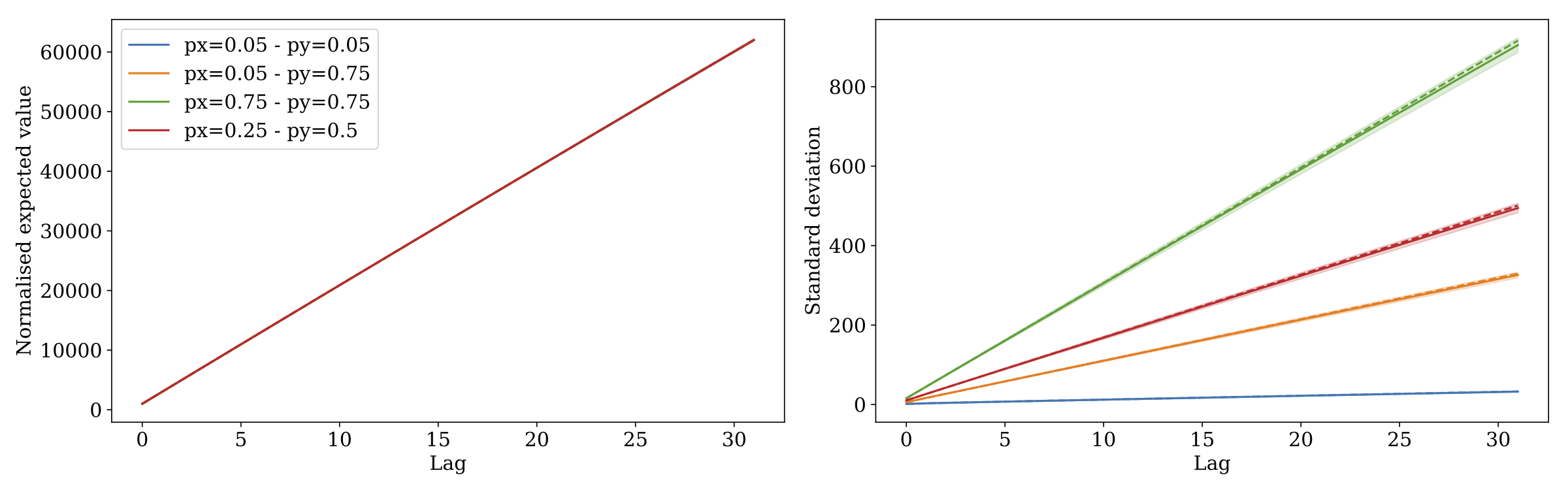}
\caption{Analytical (solid lines) and empirical (dotted lines) expected value (left) and standard deviation (right) as a function of the lag for the four scenarios given in the legend. For ease of presentation, the expected value (left panel) was normalised by $n_X n_Y$. As expected from Eq.~\ref{eq:expected_value}, this leads to all analytical estimates of the expected value being identical. The standard deviations (right panel) are calculated as $\sigma_\delta \sqrt{T}$. In both panels, error bars were computed over 100 sets of 1,000 pairs of point processes. } 
\label{fig:ex_accc}
\end{figure}

\noindent Next, we investigated the extent to which agreement between analytical and empirical estimates holds when considering considering scenarios that depart from the assumptions underlying our analytical derivation. First, we considered the case of sampled (or binned) point processes, as is frequently the case in many real-world scenarios, e.g., in physiology. Next, we introduced auto-correlations in the point processes as this has been shown to induce biases when measuring statistical dependence between point processes, e.g., when considering Poisson processes~\cite{pipa2013impact}.

\subsubsection{Impact of sampling frequency}
\label{sub:sampledPP}
We investigated the impact of sub-sampling both in independent Bernouilli sequences and in homogeneous Poisson processes. First we considered the case of independent Bernouilli sequences. Starting from reference signals of length $T$ (i.e., $T$ time bins of length 1), we constructed sub-sampled sequences of length $\ceil{\frac{T}{L}}$ where each time bin contained a $1$ if there was one or more event in the corresponding $L$ time bins in the original sequences, $0$ otherwise. Note that where $L$ was not a divisor of $T$, the last time bin was smaller than $L$. The effect of this was considered negligible. Sub-sampling has the effect of reducing the effective rate of each sequence. Since each time bin remains independent, the new rate can be calculated as $1-(1-p)^L$ and therefore the analytical formulas we derived for expected value and standard deviation should hold with those rates.

\begin{figure}[h] 
\includegraphics[width=1.0\linewidth]{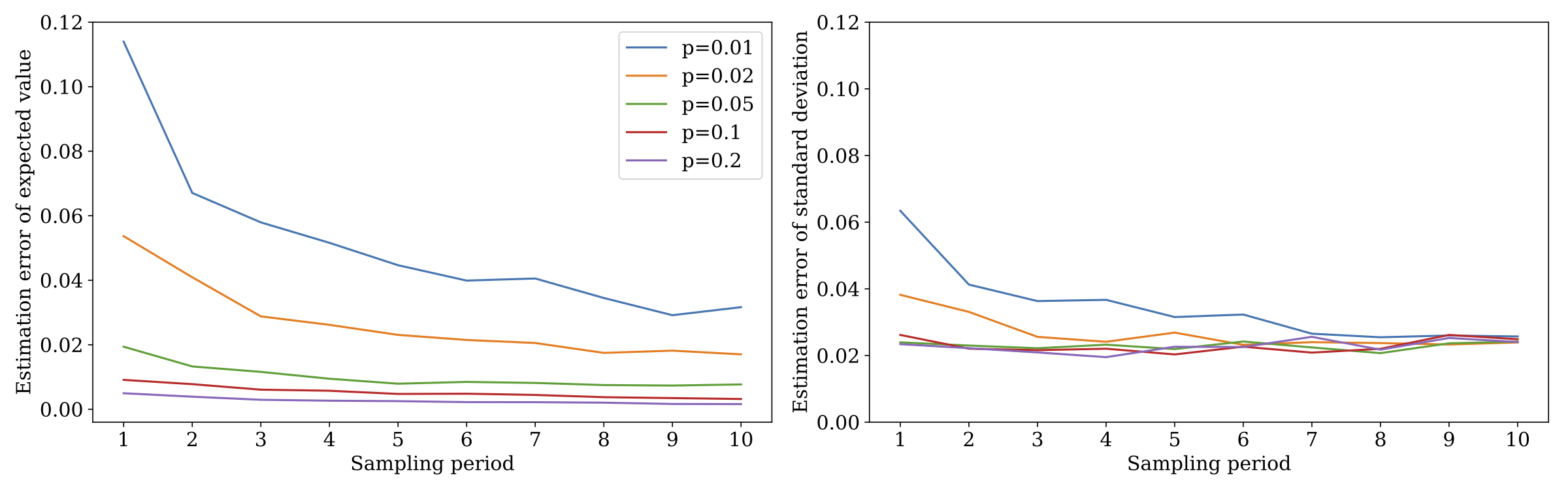}
\includegraphics[width=1.0\linewidth]{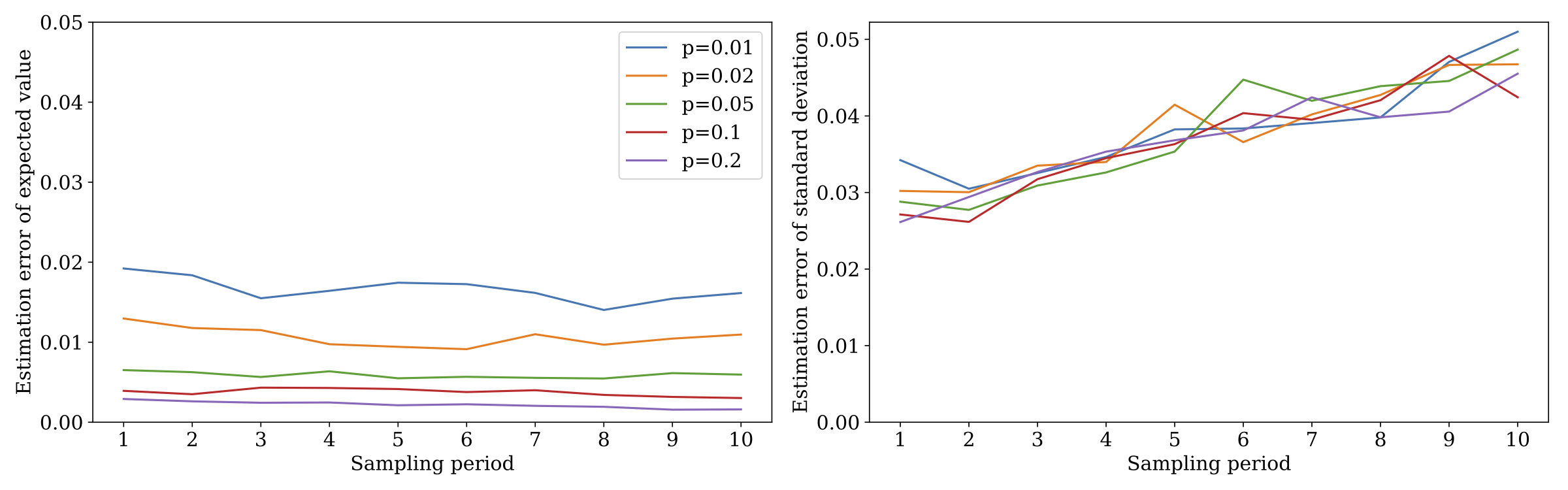}
\caption{Normalised root-mean-square error as a function of the sampling period $L$ for both expected value (left) and standard deviation (right) at lag 0 (top row) and lag $\sqrt{\frac{T}{L}}$ for Bernouilli sequences with rates provided in the legend. One hundred sets of 1,000 pairs of sequences were used. The non-sampled sequences had length $T=1000$.}
\label{fig:bin_size}
\end{figure}

This is confirmed by Figure~\ref{fig:bin_size} which shows small normalised root-mean-square errors (NRMSE) for both quantities, irrespective of the choice of lag and sampling period. For both small (top row) and large (bottom row) lags, estimation of the expected value is shown to be sensitive to the rate $p$, improving with higher rate. This is expected because given the finite (fairly small, in this case) length of the sequence, the smaller the rate, the less robust the estimation is. For the same reason, it is observed that at low lag (here, 0, i.e., instantaneous cross-correlation; top row, left panel), the error decreases with increasing sampling period, with  this improvement increasingly pronounced for decreasing rates. This is because with larger bin sizes, the resulting rate $1-(1-p)^L$ increases, leading to more robust estimations. For sufficiently high rates and sampling period (e.g., $p=0.2$, sampling period of 3; same panel), the error becomes insensitive to the choice of sampling period. Qualitatively similar observations can be made for the estimation of the standard deviation (top row, right panel). For large lags (bottom row), the estimation error of the expected value is insensitive to the choice of sampling period as expected from the fact that at large lags, the number of events allows for a more robust estimate. Further, it is significantly reduced compared to that at low lag (e.g., error for the smallest rate below 0.02 at all sampling periods, compared to up to 0.12 at small lag; bottom row, left panel). The estimation error on the standard deviation is insensitive to the value of the rate but marginally increases with the sampling period (bottom row, right panel). This is expected from the fact that the rates on the basis of which the standard deviation is calculated are an increasingly poorer approximation of the actual rates following sampling.

Next, we considered the case of homogeneous Poisson processes in lieu of Bernouilli sequences. For a given time horizon and rates, homogeneous Poisson processes will emit events that satisfy our assumptions provided the sampling frequency is large enough and therefore we should expect excellent agreement between analytical and empirical estimates. However, if the sampling frequency is reduced (or put differently, the rates increase in relation to the sampling frequency), then the likelihood that two events fall within the same time bin increases. For a Poisson process of rate $\lambda$ the probability that there are $n$ events within any bin of duration $b$ is:
\begin{equation}
P(N(t+b)-N(t) = n)= \frac{(\lambda b)^n}{n!} e^{-\lambda b}
\end{equation}
\noindent And therefore, the expected number of events 'lost' to binning over the given time horizon $T$ is:
\begin{equation}
\label{eq:nb_loss}
\begin{split}
\E(N_L) = \frac{T}{b}\sum_{n=2}^{\infty}(n-1)\frac{(\lambda b)^n}{n!} e^{-\lambda b} 
= \lambda T - \frac{T}{b}(1 - e^{-\lambda b}) \\
\end{split}
\end{equation}

\noindent This shows that for a given time horizon $T$ and known expected number of events, our statistic should be more accurate if the sampling frequency is high (i.e., the bin size $b$ is small). To briefly illustrate this, we investigated agreement when the sampling period was kept constant (unit time bins) but the rates were increased such that the fraction of events lost to binning varied between $0.5\%$ and $20\%$. For the sake of brevity, we once again only considered processes with homogeneous rates and quantified agreement using NRMSE at lags $0$ and $\sqrt{T}$.\newline

As shown by Figure~\ref{fig:poisson_bin_size}(top), when not correcting the rates for the loss of events through binning, both estimation errors increase with larger rates, i.e., as the proportion of events lost to binning increases; and this degradation is rapid, with a $20\%$ loss of events resulting in a NRMSE of almost $40\%$ in the expected value. This error is insensitive to the lag considered, except at very low rates when a low number of events (at lag 0) leads to an unreliable estimation process as explained earlier. When correcting the rates for the loss of events, however, Figure~\ref{fig:poisson_bin_size}(bottom) shows that except for very low rates, there is very good agreement across all rates, as demonstrated in Appendix~\ref{app:binning}. 

\begin{figure}
\center \includegraphics[width=1.0\linewidth]{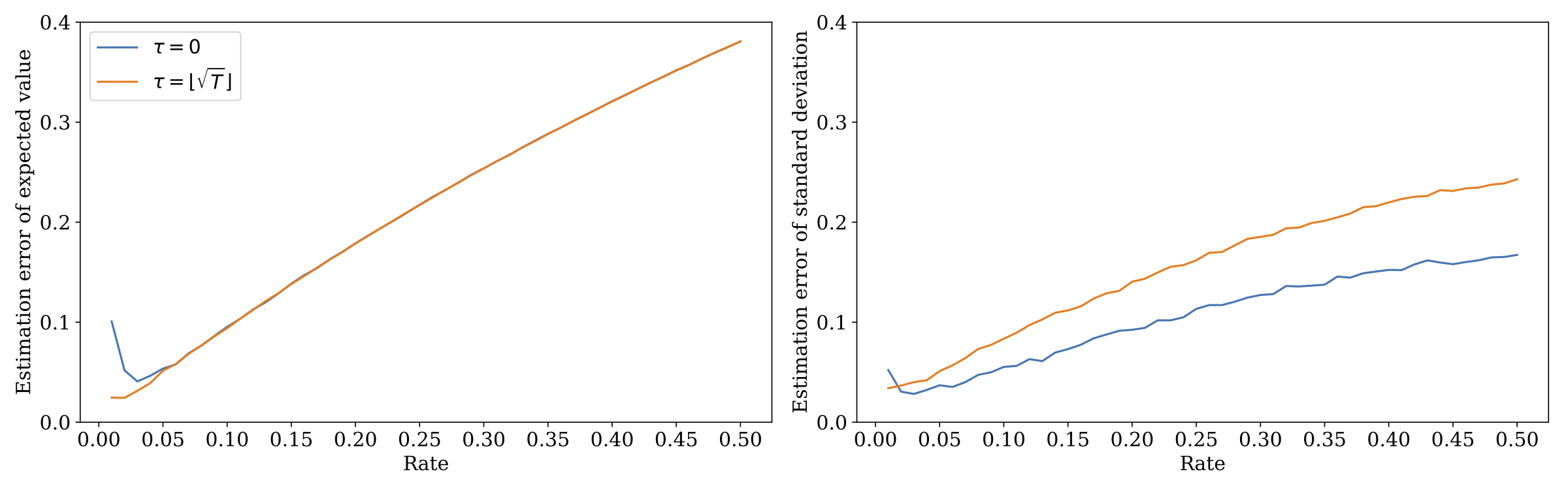}
\includegraphics[width=1.0\linewidth]{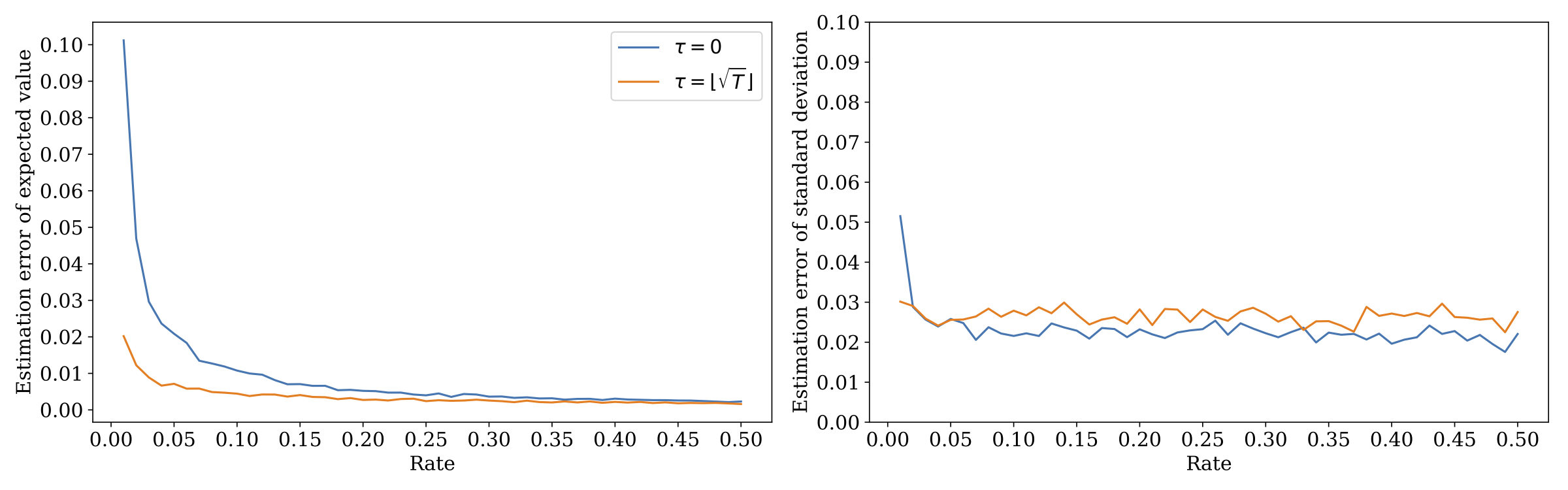}
\caption{RMSE for both expected value (left) and standard deviation (right) at lags 0 and $\sqrt{T}$ for homogeneous Poisson processes with varying rates. Top: Error when the analytical value is calculated assuming no events were lost through binning. Bottom: Error when the rate was estimated based on the observed number of events. One hundred sets of 1,000 pairs of sequences were used.}
\label{fig:poisson_bin_size}
\end{figure}


\subsubsection{Auto-correlation}
We used geometric AR(1) processes~\cite{mckenzie_simple_1985} to generate auto-correlated waiting times for the events of the point processes. The geometric AR(1) process is defined as: 
\begin{equation}
X_t = \alpha * X_{t-1} + B_t G_t
\end{equation}
where $X$ is a discrete random variable Geometric with parameter $0<\theta<1$, $\left\{B_t\right\}$ and $\left\{G_t\right\}$ are independent sequences of iid random variables such that the random variable $B_t$ is Binary with parameter $0\leq\alpha\leq1$ and the random variable $G_t$ is Geometric with parameter $\theta$, and the $*$ operator is defined as:
\begin{equation}
\alpha * X_t  = \sum_{i=0}^{X_t}Y_i \\
\end{equation}
where $\left\{Y_i\right\}$ is a sequence of iid Binary random variables with parameter $\alpha$. Put simply, this model states that the elements at time $t$, $X_t$, are the sum of (1) the survivors at time $t-1$, $X_{t-1}$, each with probability of survival $\alpha$ and of (2) elements from the innovation process $B_tG_t$. For any non negative integer $k$, the auto-correlation at lag $k$ is given by $\rho(k) = \alpha^k$. In particular, $\alpha$ is the auto-correlation at lag 1.\\

\noindent To quantify the effect of auto-correlations on the agreement between analytical and empirical estimates, we generated point processes systematically varying $\alpha$ for processes with rate $p \in \left\{0.01,0.02,0.05,0.1,0.2\right\}$ over $\llbracket 1;T\rrbracket$, T=1,000. Agreement was once again quantified in terms of the normalised root-mean-square error. As shown by Figure~\ref{fig:autocor}(top row), the estimation error in both expected value and standard deviation increases with the degree of auto-correlation, as expected from the fact that the data increasingly depart from the assumption of independence underlying the derivation of our statistic. Strikingly, we observe that for all lags, the estimation errors on increase as the rates decrease. This can be explained as follows. For a fixed horizon T, the larger the rate, the more constrained the distribution of inter-event intervals is. Auto-correlations introduce structure in the sequencing of the inter-event intervals which exacerbates the covariance term. As a result, empirical estimates show less variability around the mean. For low levels of correlation ($<0.2$) and sufficiently high rates ($>0.05$), there is good agreement between analytical and empirical estimates of the expected value. In fact, as shown by the bottom row of Figure~\ref{fig:autocor}, for $\alpha=0.1$, there is very good agreement for both expected value and standard deviation over all lags even in the worst case scenario of $p=0.01$ (where the expected number of events is only 10). 
\newline

\begin{figure}[!]
\includegraphics[width=1.0\linewidth]{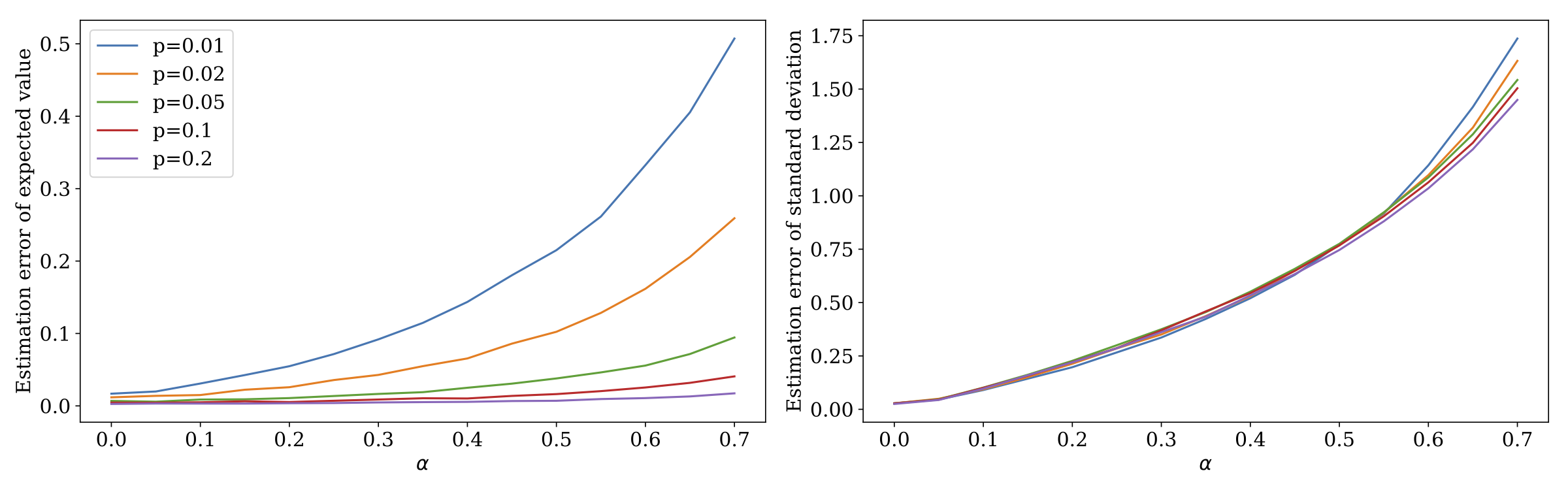}
\includegraphics[width=1.0\linewidth]{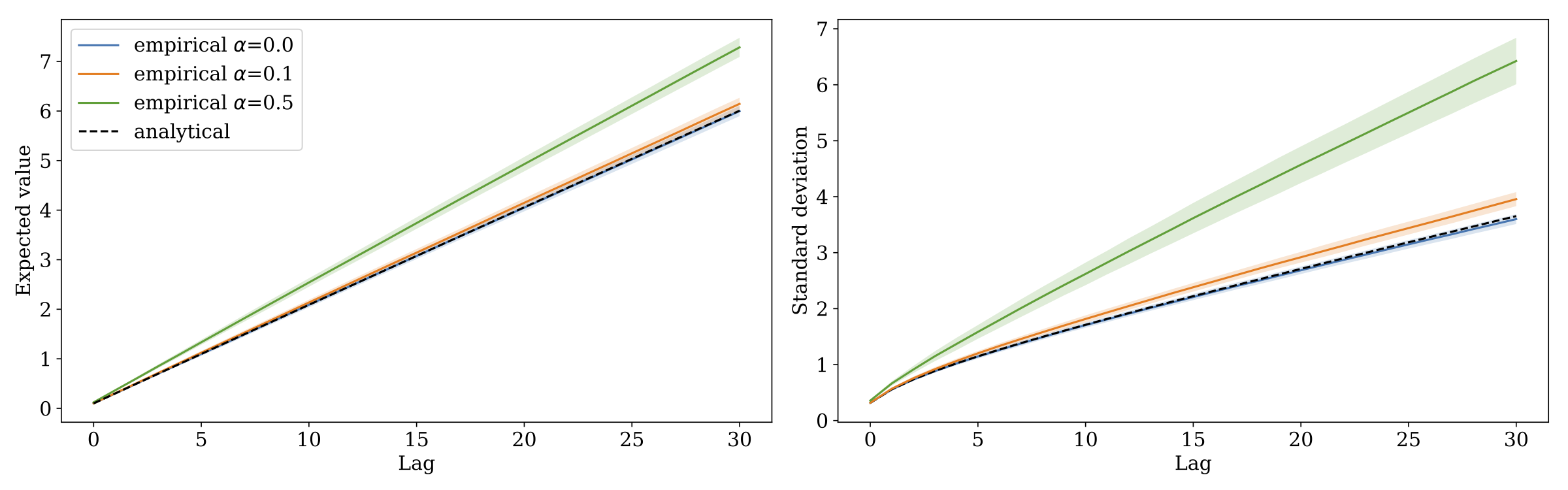}
\caption{Top row: Normalised root-mean-square estimation error of the expected value (left panel) and standard deviation (right panel) as a function of $\alpha$ (the auto-correlation at lag 1) at lag $\floor{\sqrt{T}}$ for the 5 rates shown in the legend. Those errors were calculated over over 100,000 estimates. Bottom row: Empirical expected value (left panel) and standard deviation (right panel) as a function of the lag for $p=0.01$ (the worst case scenario) and three values of auto-correlation $\alpha\in\left\{0,0.1,0.5\right\}$. The analytical curves (black dotted line) are provided for reference. Shaded areas denote the standard deviation of the empirical estimates and were computed over one hundred sets of 1,000 estimates.}
\label{fig:autocor}
\end{figure}

\subsection{Normal convergence of the empirical estimates}
\label{sub:convergence}

\noindent In Section~\ref{subsec:normality}, we have shown that in the limit of large horizon times T, the distribution of our statistic converges to a normal distribution with parameters the analytical results provided in \eqref{eq:expected_value} for the expected value and \eqref{sigma2} for the variance. By way of empirical validation, we systematically varied rates and lags and showed that there was always a data duration after which the distribution of the empirical estimates could be said to be normally distributed based on the evidence of a KS test\footnote{Normality was also assessed using both Shapiro Wilk and Anderson Darling tests and results were consistent across all three tests (data not shown).} with $p > .05$ (with 5000 estimates). Figure~\ref{fig:normality_heat_map} shows those times for the rates and lags considered. Since the expected value is only dependent on the product of the rates, we used homogeneous rates, $\lambda_x = \lambda_y = \frac{n}{T} \in \{0.01,0.02,...,0.5\}$. Focusing on homogeneous rates is justified by the fact that convergence is only marginally affected by heterogeneity, as demonstrated by panel D. The gradual convergence of the empirical distributions to a normal distribution is illustrated in panels B and C when the lag and recording duration increase, all other factors being kept constant. \newline

\begin{figure}[p]
\center\includegraphics[width=0.5\linewidth]{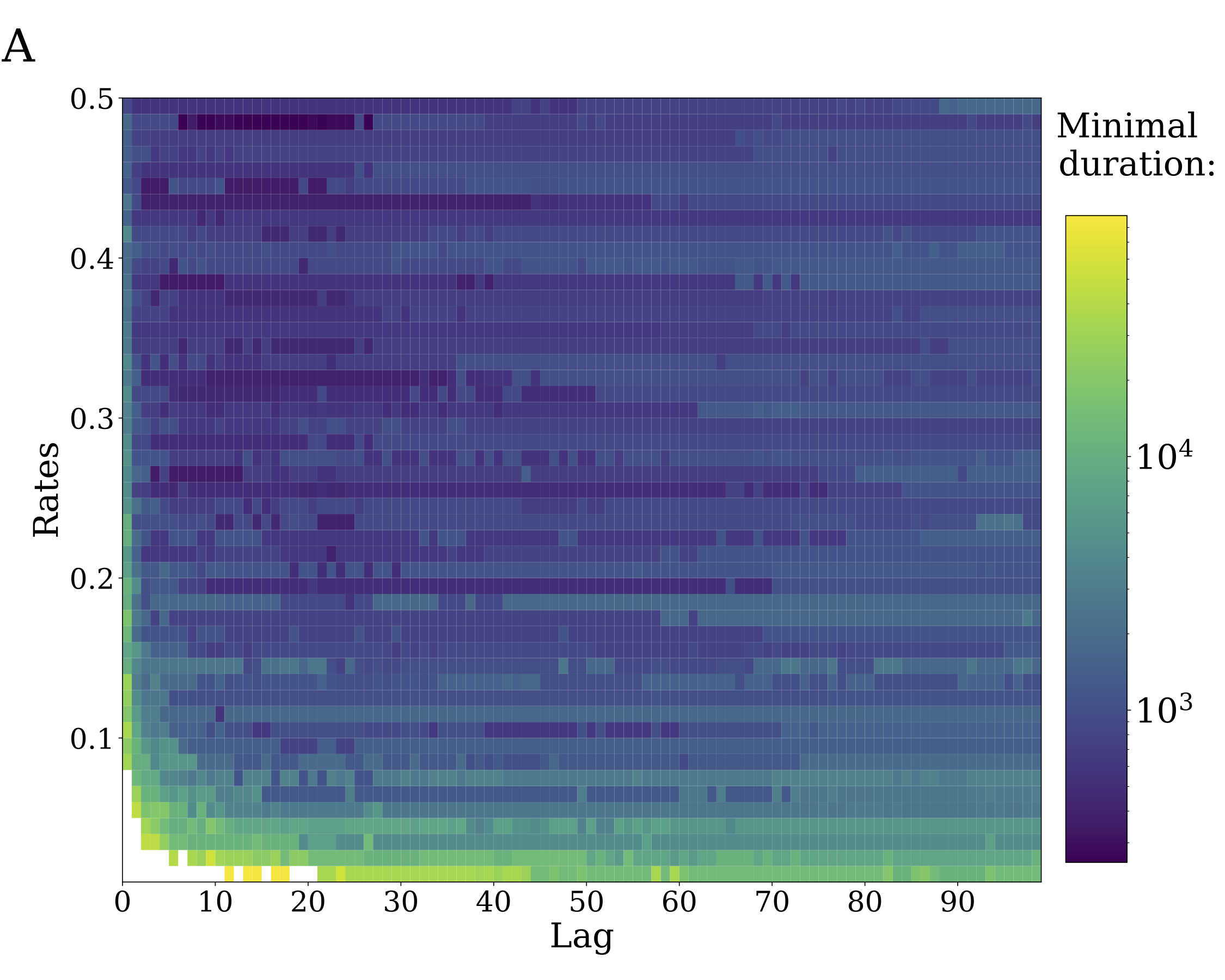}
\includegraphics[width=0.95\linewidth]{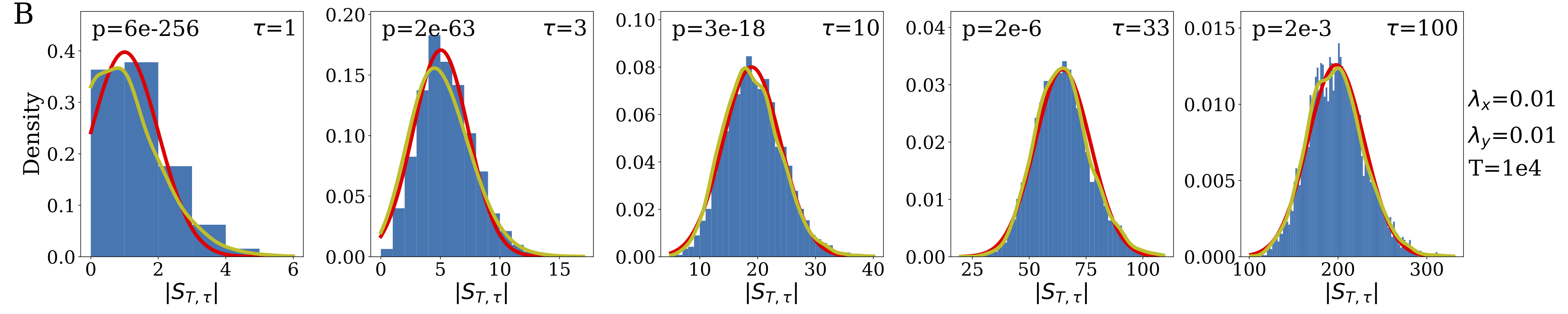}
\includegraphics[width=0.95\linewidth]{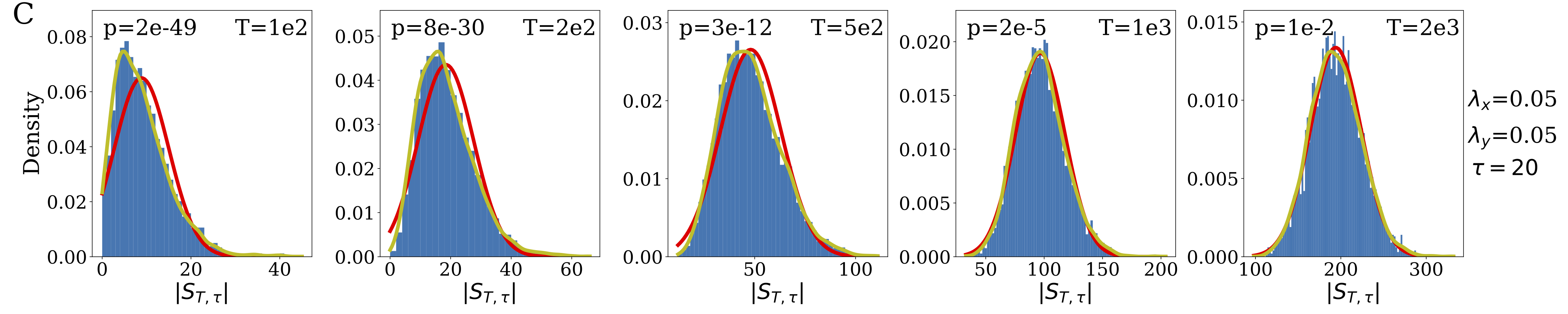}
\includegraphics[width=0.95\linewidth]{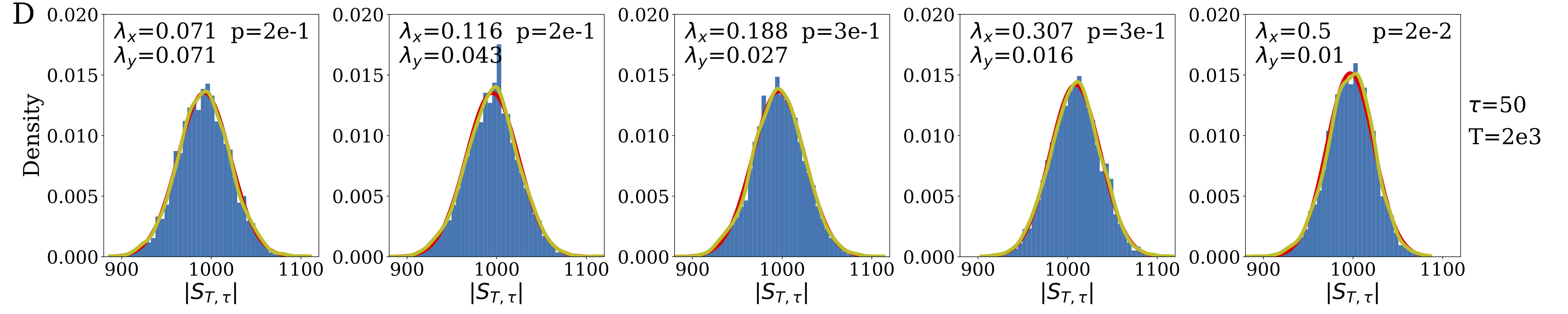}
\caption{\textbf{A:} Minimal recording duration after which the empirical distribution is approximately normal as a function of rates and lags. \textbf{B:} Empirical distributions as a function of the lag for given total duration and rates. \textbf{C:} Empirical distributions as a function of the total duration for given lag and rates. \textbf{D:} Empirical distributions as a function of rate heterogeneity for given total duration, lags and rate products. All panels: (red) the theoretical distribution for the scenario; (yellow) the kernel density estimation of the empirical distribution.} 
\label{fig:normality_heat_map}
\end{figure}

\noindent Empirical convergence to a normal distribution is significantly impacted by the fact that our statistic takes integer values (it is a count of co-occurrent events). First, normal approximation can only be achieved if there is a wide range of values the statistic can take. This is clearly evidenced by panels B and C whereby the p-value increases (the distribution becomes more normal) as recording duration (respectively lag) increases leading to a wider range of values. Using kernel density estimation (yellow curve) provides a smoother estimation of the density which matches closely the analytical one (red), see right-hand side panels for example, however, the empirical distribution is step-like and therefore not assessed as being normal. Second, if the statistic at a lag is close to 0 or close to its maximum value ($n_X n_Y$), the integer-valued nature of the statistic will once again prevent the empirical distribution to be approximated as normal. For example, the distribution of the statistic in the first plot in panel B is not symmetric because of its lower bound. This was previously noted by~\cite{palm1988significance}. More generally, this issue is illustrated by the top row of Figure \ref{fig:normality_heat_map} whereby estimates for low rate processes require significantly longer durations for the empirical distributions to become approximately normal. 

\noindent The asymptotic convergence of the statistic around its expected value justifies the construction and use of a Z-score to measure deviation from the assumption of independence. For two time series $X$ and $Y$ emitting $n_X$ and $n_Y$ events within $\llbracket 1; T\rrbracket$, it is defined as:
\begin{equation}
\label{eq:Z-score}
Z_{X,Y}(\tau) = \frac{|S_{T, \tau}| - \E(|S_{T, \tau}|)}{ \sigma_{\delta}\sqrt{T} }
\end{equation}
where $|S_{T, \tau}|$ is the statistic at lag $\tau$, $\E(|S_{T, \tau}|)$ is the analytical expected value~\eqref{eq:expected_value} and $\sigma_{\delta}$ the analytical standard deviation~\eqref{var} where we are estimating parameters $p_X, p_Y$ with their empirical estimators $\frac{n_X}{T}$ and $\frac{n_Y}{T}$ respectively.  

\subsection{Application to a delayed version of the Common Shock Model}
\label{sub:sens_anal}

\subsubsection{Description of model}
The bivariate Poisson Common Shock Model provides a flexible platform to validate the effectiveness of our statistic in quantifying pairwise coupling between two point processes. Indeed it is often used as a benchmark for bivariate counts~\cite{genest2018new}. In its classic form, and following~\cite{campbell1934poisson}, it  takes two Poisson point processes $X_1 \sim \mathcal{P}(\lambda_{X_1})$ and $X_2 \sim \mathcal{P}(\lambda_{X_2})$ and assumes that each of them can be written as the sum of two independent Poisson processes respectively $Z$ and $Y_1$, and $Z$ and $Y_2$, where $Z \sim \mathcal{P}(\lambda_{Z})$, $Y_1 \sim \mathcal{P}(\lambda_{X_1} - \lambda_Z)$ and $Y_2 \sim \mathcal{P}(\lambda_{X_2} - \lambda_Z)$. Whilst the choice of rates $\lambda_{Z}$, $\lambda_{Y_1}$ and $\lambda_{Y_2}$ enables to adjust the strength of the interaction, in this form, the model only allows for instantaneous interaction. Here, we use the delayed version ~\cite{cox1972multivariate} such that, instead of $Z$, we consider two processes $Z_1$ and $Z_2$ that are delayed versions of $Z$, with delays  taken from some probability distribution function $F_i(.), i \in \{1,2\}$ (see Figure~\ref{fig:DCMS} for a graphical representation). This process has a number of possible interpretations but a very prominent one in the neurophysiology literature is the notion of \textit{common drive}~\cite{sears1976short}. Since Z is not actually observed and we are not interested in recovering it, without loss of generality, it is possible to consider that only one of the process, e.g. $Z_2$, is a delayed version of $Z$, the other, $Z_1$ being equal to Z. We thus denote $Z^{\star}$ the delayed version of $Z$. To model the delay we used a normal distribution of parameters $\mu_{\delta}$ and $\sigma_{\delta}$, allowing us to control both average lag between events from $Z$ and $Z^{\star}$ and lag jitter. 

\begin{figure}[h]
   \centering \includegraphics[width=0.5\linewidth]{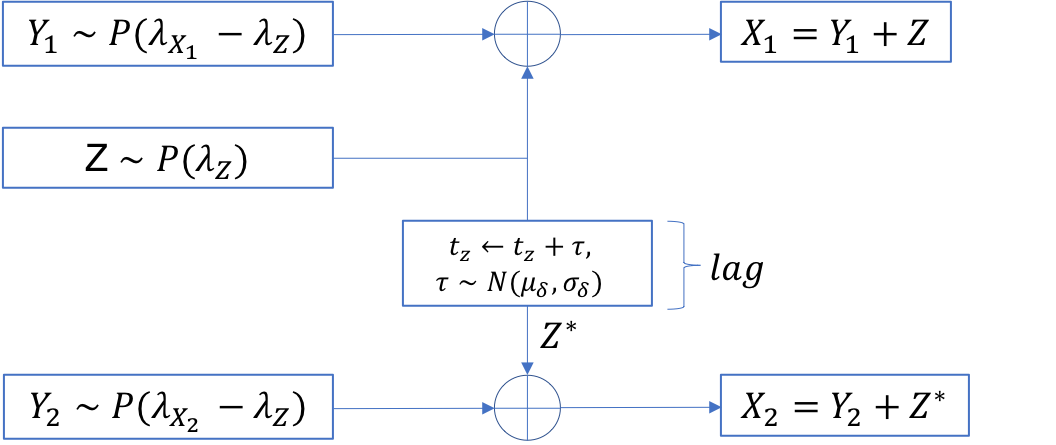}
   \caption{The Delayed Common Shock Model}
   \label{fig:DCMS}
\end{figure}

\subsubsection{Brief validation of the statistic}
\label{sub:lag_analysis}
Since previous results from Section~\ref{sub:sampledPP} showed excellent agreement between empirical and analytical estimates of the expected value and standard deviation of the statistic for independent Poisson processes, application of the Z-score to the delayed common shock model in the no-interaction case (i.e., $\lambda_z=0$) should lead to a Z-score taking an expected value of 0 and a standard deviation of 1. This behaviour is confirmed by the left panel of Figure~\ref{fig:ex_zscore} with consistent empirical estimates (expected value and standard deviation) across all lag values.\newline 

As representative example of the delayed interaction case, we considered the case where all 3 processes had the same rate $\lambda=0.01$, an average lag $\mu_{\delta} = 100$ and jitter $\sigma_{\delta}=10$ (chosen to be significantly smaller than the average lag, as expected in most real-world scenarios). The behaviour of the Z-score is shown in right panel of Figure~\ref{fig:ex_zscore}. At first sight, the interpretation of this behaviour is not straightforward. First, it is observed that whilst the Z-score for value of lags $\tau \ll \mu_{\delta}$ is close to zero, it is not zero. This is a direct implication of the delay introduced by the model. The analytical expected value of the statistic provides the expected number of co-incident events at each lag conditional to the processes emitting a given number of events over the time horizon T. With delayed interaction, and depending on the relationship between the delay and the rate of the processes, the actual number of co-incident events observed for lags less than the delay will diverge from that expected at random. In this case, with each process only emitting (on average) 10 events (i.e., 1 every 100 time steps), a delay of 100 is likely to reduce the likelihood of co-occurrence at very small lags. The Z-score then shows a steady increase peaking at around $\mu_{\delta} + 1.5\sigma_{\delta}$ before steadily decreasing. The fact that the Z-score does not peak at the average lag but later is a natural result of the cumulated nature of our statistic. To illustrate this, we calculated the derivative of the Z-score (orange line) as a proxy for the  cross-correlation at each lag and superimposed the distribution of delays used in the model (green line). As the lag approaches the mean lag, the number of co-incident events begins to exceed that expected at random and therefore, the Z-score increases. The rate of increase is maximum where the distribution of delays peaks. As the lag increases from the mean delay, the ratio between the number of observed co-incident events to the number of co-incident events expected at random reduces and with it the Z-score. Once again, because the Z-score is a cumulative statistic (measuring the number of co-incident events up to the lag considered), the statistic will not return to 0. In other words, the Z-score at high lags takes into account the fact there has been substantial interaction at smaller lags. There are two implications to this. First, the Z-score will not show great sensitivity to interactions that have large (in relation to the duration of the record) average lags. Second, a positive value of the Z-score at a given lag does not indicate that significant interaction is actually taking place at that lag. Our results are based on calculating the statistic at each of all lags considered. Note that the alternative approach of extracting the peak value of the statistic over those lags for each realisation would not permit comparison to a $\mathcal{N}(0,1)$ distribution as noted in~\cite{kramer2009network} for cross-correlation analysis. For such comparison to be made, one needs to look at each lag. However, given the cumulated nature of the Z-score, any substantial interaction will lead to significant Z-score values over a wide range of lags (greater than the mean delay) such that any prior knowledge and/or hypothesis as to the nature of the delay (e.g., conduction velocity in neurophysiology) could be usefully exploited. 

\begin{figure}[h]
\includegraphics[width=1.0\linewidth]{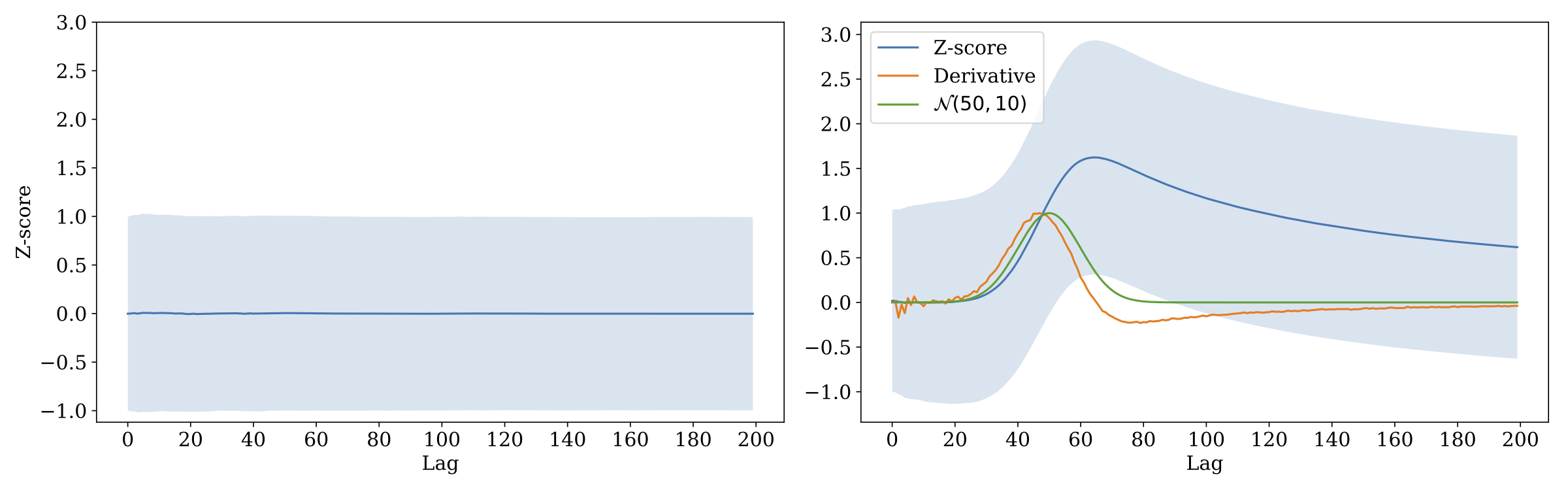}
\caption{Estimated expected value of the Z-score over 10,000 trials as a function of the lag in absence (left) and presence (right) of delayed interaction. We used T=10,000, $\lambda_{y1} = \lambda_{y2} =  \frac{100}{T}$ and $\lambda_{z} =0$ for the left panel; T=10,000, $\lambda_{y1} = \lambda_{y2} = \lambda_{z} =  \frac{100}{T}$, $\mu_{\delta} = 50$ and $\sigma_{\delta}=10$ for the right panel.} 
\label{fig:ex_zscore}
\end{figure}

\section{Discussion}
In this paper, we have proposed a statistic that enables the robust assessment of the presence of (instantaneous or delayed) statistical dependence between two point processes. This statistic extends rigorous work on Pearson's correlation~\cite{johnson1976analysis} by providing rigorous results when considering interaction over multiple lags. The identification of statistical dependence is based on an exact derivation of the number (and standard deviation) of co-incident events expected to be observed over a time period given two discrete independent point processes. This statistic depends only on parameters that can be readily estimated by counting the number of events observed over a given period of time and is valid for every value of those parameters, even extreme cases (e.g., only one event over the entire record), unlike its frequency-domain counterpart, coherence. A central limit theorem for this number was derived that demonstrates convergence of the distribution of the statistic to a normal distribution of known parameters and permits the construction of a Z-score quantifying the likelihood of a given number of co-incident events in a time period being observed if the processes were independent.\newline

\noindent Agreement between analytical and empirical values was verified and the statistic was shown to be well behaved even when departing from the assumptions of the model. For example, it was shown to be robust to sampling (in most reasonable cases) as well as when auto-correlated point processes were considered (within limit). \newline

\noindent The statistic has two main limitations. First, from an interpretation viewpoint, it is somewhat less intuitive than other methods due to its cumulative nature. A significant Z-score at a particular lag does not imply significant underlying dependence at that lag, merely, that there was dependence at one or many shorter lags. This means some care must be taken when selecting the lag(s) over which to make an inference. Nevertheless, we have shown that in some cases, it will be possible to make inference about the nature of the interaction through differencing of the method. Second, in its basic form, and like most such techniques, the statistic assumes stationarity of the signals. Adaptations of the statistic to handle non-stationarity (e.g., via optimal filtering or windowing) is left for future work.

\bibliographystyle{plain}
\bibliography{biblio}

\appendix
\section{The case of Bins for Poisson processes}
\label{app:binning}

Suppose that $N_X$, $N_Y$ are independent Poisson processes with rates $\lambda_X$, and $\lambda_Y$ respectively. 
We will discretise these processes using bins of size $b > 0$, with a time horizon $T$. \newline

\noindent Partition the time interval $[0,T]$ into disjoint intervals of size $b$. In other words, interval $k$ (denoted by $I_k$) is the interval $[(k-1)b, kb )$, for $1 \le k  \le [Tb^{-1}]$. 
The Poisson processes are independent in each interval, so we define a sequence of independent Bernoulli variables 
\be
X_i = 1\!\!1\{ N_X(ib) - N_X((i-1)b) \ge 1 \}, \quad \text{ and } \quad  Y_j = 1\!\!1\{ N_Y(ib) - N_Y((i-1)b) \ge 1\}
\ee
Their respective probabilities of success are 
\be \label{eq:poi2ber}
p_X = 1 - e^{-\lambda_X b}  \quad \text{ and } \quad p_Y = 1 - e^{-\lambda_Y b}. 
\ee
Therefore, using the bin size $b$, we reduce this to our discrete model, with discrete time horizon $T_d(b) = [Tb^{-1}]$ and success probabilities for the Bernoulli variables.~\label{eq:poi2ber} The lag value $\delta$ from the continuous model is also scaled by the bin size; since it needs to be an integer, by convention we take it to be 
$ \delta_{d}(b) = [\delta b^{-1}]$. \newline

\noindent Using this information, the expected number of marks will be $\E(|\widetilde S_{T_{d}(b), \delta_d(b)}|)$ and the variance needed for the CLT will be of leading order $\sigma_{\delta_d(b)}^2 T_d(b)$. 
Using these theoretical results, one can quantify the estimation error induced by binning according to $b$, since we will be ignoring all events but 1 that happen in the same sub-interval $I_k$.\newline

\noindent Suppose for convenience that $\lambda_X \ge \lambda_Y$. 
The probability $p_{\lambda_X, b}$ of seeing 2 or more Poisson points for $N_X$ in $I_k$ is 
\begin{align*}
p_{\lambda_X, b} &= 1 -  e^{-\lambda_X b} (1 + \lambda_X b)\le 1- (1 - \lambda_X b) (1 + \lambda b) = (\lambda_X b)^2.
\end{align*}
Therefore, if $b \ll \lambda_X^{-1}$ then this probability will be small. \newline

\noindent Furthermore, if $\lambda_X b \le C T^{-\alpha} $ for some $\alpha > 1/2$, then a Taylor expansion of the expected value of points lost because of binning shows that $\E(N_L) \ll \sqrt{T}$ which is not enough to alter the leading order of $\sigma_\delta^2 \sqrt T$ and $\E(| \widetilde S_{T, \delta}|)$. In this case the Poisson model and the discrete model obtained by decreasing bin sizes will coincide.

\end{document}